\newcommand{\thd}{\mathchoice{{\textstyle\frac13}} {{\textstyle\frac13}}
                {{\scriptscriptstyle\frac13}}{{\scriptscriptstyle\frac13}}}
\newcolumntype\undefined
  \newcolumntype{C}{>{$}c<{$}}
  \newcolumntype{L}{>{$}l<{$}}
  \newcolumntype{R}{>{$}r<{$}}
\newcommand{\omr}{{\bar\rho}}
\newcommand{\AR}[1]{\textsf{AR}(#1)}
\newcommand{\OU}{\textsf{OU}}
\newcommand{\GA}[1]{\Ga\big(#1\big)}
\newcommand{\gA}{\mathfrak{A}}
\newcommand{\cN}{\mathcal{N}}
\newcommand{\pat}{\frac{\pi\alpha}{2}}
\newcommand{\Del}[1]{{\ensuremath{\Delta_{#1}}}}
\newcommand{\BP}{\textsf{BP}}
\newcommand{\SaS}{\textsf{S$\alpha$S}}
\newcommand{\Ga}{\mathsf{Ga}}
\newcommand{\cT}{{\mathcal{T}}}
\newcommand{\bbC}{{\mathbb{C}}}
\newcommand{\bbN}{{\mathbb{N}}}
\newcommand{\bbR}{{\mathbb{R}}}
\newcommand{\bbZ}{{\mathbb{Z}}}
\newcommand{\etc}{\emph{etc}}
\newcommand{\ie}{\emph{i.e.{}}}
\newcommand{\one}{\mathbf{1}}
\newcommand{\bone}[1]{\one_{\{#1\}}}
\newcommand{\Sec}[1]{Section\thinspace(\ref{#1})}
\newcommand{\Eqn}[1]{Eqn\thinspace(\ref{#1})}
\newcommand{\bet}[1]{\left [#1\right ]} % bet [ ]
\newcommand{\cet}[1]{\left (#1\right )} % cet ( )
\newcommand{\set}[1]{\left\{#1\right\}} % set { }
\newcommand{\ind}{\mathrel{\mathop{\sim}\limits^{\mathrm{ind}}}}
\newcommand{\iid}{\mathrel{\mathop{\sim}\limits^{\mathrm{iid}}}}
\renewcommand{\P}{{\mathsf{P}}} \newcommand{\E}{{\mathsf{E}}}
\newcommand{\Po}{\mathsf{Po}}
\newcommand{\Corr}{{\mathsf{Corr}}}
\def\need#1{\vskip0pt plus#1in\penalty-250\vskip0pt plus-#1in}%
\newcommand{\Be}{\mathsf{Be}}
\newcommand{\cB}{{\mathcal{B}} }
\newcommand{\cG}{{\mathcal{G}}}
\newcommand{\Fig}[1]{Figure\thinspace(\ref{#1})}
\newcommand{\V}{{\mathsf{V}}}\newcommand{\Cov}{{\mathsf{Cov}}}
\newcommand{\NB}{\mathsf{NB}}\newcommand{\No}{\mathsf{No}}
\newcommand{\eps}{\epsilon}\newcommand{\hide}[1]{}
\newcommand{\pg}{\emph{p.}\thinspace}
\newcommand{\half}{{\mathchoice{{\textstyle\frac12}} {{\textstyle\frac12}}
                {{\scriptscriptstyle\frac12}}{{\scriptscriptstyle\frac12}}}}
\newtheorem{prop}{Proposition}
\newtheorem{thm}{Theorem}
\newcommand{\Ca}{\mathsf{Ca}}\newcommand{\St}{\mathsf{St}}
\newcommand{\sgn}{\mathop{\mathrm{sgn}}}
\def\Strut{\vrule width0pt height 16pt depth 4pt}%
\def\dat$Dat#1: #2 #3(#4) ${#2}
\newif{\ifBib} \Bibfalse %true
\begin{document}
\thispagestyle{empty}

\title {Lecture Notes on Stationary Gamma Processes}
\author{Robert L. Wolpert%\thanks {Robert L.~Wolpert is Professor of
%        Statistical Science, and Professor of Environmental Science and
%        Policy, at Duke University, Durham, NC.}
}
\date{\today}
%\DRAFT
\maketitle
\begin{abstract}\noindent
  For each $\lambda>0$ and every square-integrable infinitely-divisible
  (ID) distribution there exists at least one stationary stochastic process
  $t\mapsto X_t$ with the specified distribution for $X_1$ and with
  first-order autoregressive (\AR1) structure in the sense that the
  autocorrelation of $X_s$ and $X_t$ is $\exp(-\lambda|s-t|)$ for all
  indices $s,t$.  For the special case of the standard Normal distribution,
  the process $X_t$ is unique--- namely, the first-order autoregressive
  Ornstein-Uhlenbeck velocity process.  The process $X_t$ is also uniquely
  determined if $X_1$ is accorded the unit rate Poisson distribution.

  For the Gamma distribution, however, $X_t$ is \emph{not} determined
  uniquely. In these lecture notes we describe six distinct processes with
  the same univariate marginal distributions and \AR1 autocorrelation
  function.  We explore a few of their properties and describe methods of
  simulating their sample paths.
\end{abstract}
% \vfill
\section{Introduction}\label{s:intro}

For fixed $\alpha,\beta>0$ these notes present six different stationary
time series, each with Gamma $X_t\sim \Ga(\alpha,\beta)$ univariate
marginal distributions and autocorrelation function $\rho^{|s-t|}$ for
$X_s,~X_t$.  Each will be defined on some time index set $\cT$, either
$\cT=\bbZ$ or $\cT=\bbR$.

Five of the six constructions can be applied to other Infinitely Divisible
(ID) distributions as well, both continuous ones (normal, $\alpha$-stable,
\etc.) and discrete (Poisson, negative binomial, \etc.).  For specifically
the Poisson and Gaussian distributions, all but one of them (the Markov
change-point construction) coincide--- essentially, there is just one
``\AR1-like'' Gaussian process (namely, the \AR1 process in discrete time,
or the Ornstein-Uhlenbeck process in continuous time), and there is just one
\AR1-like Poisson process.  For other ID distributions, however, and in
particular for the Gamma, each of these constructions yields a process with
the same univariate marginal distributions and the same autocorrelation but
with different joint distributions at three or more times.

First, by ``$\Ga(\alpha,\beta)$'' we mean the Gamma distribution with mean
$\alpha/\beta$ and variance $\alpha/\beta^2$, \ie, with shape parameter
$\alpha$ and \emph{rate} parameter $\beta$.  The pdf and chf are given by:
%\[ f(x\mid \alpha,\beta) = \frac{\beta^\alpha}{\Gamma(\alpha)}
%x^{\alpha-1}e^{-\beta x}\bone{x>0} \qquad
%   \chi(\omega\mid \alpha,\beta) = (1-i\omega/\beta)^{-\alpha}.\]
\begin{align}
 f(x\mid \alpha,\beta) 
    &= \frac{\beta^\alpha}{\Gamma(\alpha)}~
       x^{\alpha-1}e^{-\beta x}\bone{x>0}\notag\\
  \chi(\omega\mid \alpha,\beta)
    & = (1-i\omega/\beta)^{-\alpha}\notag\\
    & = \exp\set{-\int_{\bbR_+} e^{i\omega u}~
        \alpha e^{-\beta u}\,u^{-1}\,du}.\label{e:levy}
\end{align}
The sum $\xi_+:=\sum \xi_j$ of independent random variables $\xi_j\ind
\Ga(\alpha_j,\beta)$ with the same rate parameter also has a Gamma
distribution, $\xi_+\sim\Ga(\alpha_+,\beta)$, if $\set{\alpha_j}
\subset\bbR_+$ are summable with sum $\alpha_+ :=\Sigma\alpha_j <\infty$.
\Eqn{e:levy} shows that the ``L\'evy measure'' for this distribution is
$\nu(du) = \alpha e^{-\beta u}\, u^{-1}\bone{u>0}du$.

From the Poisson representation of ID distributions
\[ X = \int_\bbR u \cN(du) \]
for $\cN(du)\sim\Po\big(\nu(du)\big)$ we can show that the extremal
properties of ID random variables depend only on the L\'evy measure
$\nu(du)$: for large $u\in\bbR_+$,
\begin{align}
\P[ X > u ] &\approx \P\big[\cN\big((u,\infty)\big)\ne\emptyset]\notag\\
            &= 1-\exp\big(-\nu\big((u,\infty)\big)\notag\\
            &\approx \nu\big((u,\infty)\big)
             \approx (\alpha/\beta u)e^{-\beta u},\label{e:aSx}
\end{align}
from which one might mount a study of the multivariate extreme properties
of the six processes presented below.

\section{Six Stationary Gamma Processes}\label{s:6procs}
\subsection{The Gamma $\AR1$ Process}\label{ss:ar1}

Fix $\alpha,\beta>0$ and $0\le\rho<1$.  Let $X_0\sim\Ga(\alpha,\beta)$ and  
for $t\in\bbN$ define $X_t$ recursively by 
\begin{align} X_t &:= \rho X_{t-1} + \zeta_t \label{e:ar1}
\end{align}
for iid $\set{\zeta_t}$ with chf
$\E e^{i\omega\zeta_t} =(1-i\omega/\beta)^{-\alpha} (1-i\rho
\omega/\beta)^\alpha =\big[\frac{\beta-i\omega}
{\beta-i\rho\omega}\big]^{-\alpha}$ (easily seen to be positive-definite,
with L\'evy measure
$\nu_\zeta (du)=\alpha\bet{e^{-\beta u}-e^{-\beta u/\rho}}
u^{-1}\bone{u>0}du$).  A simple way of generating $\set{\zeta_t}$ with this
distribution from \citep{Walk:2000} is presented in the Appendix.  The
process $\{X_t\}$ has Gamma univariate marginal distribution
$X_t\sim\Ga(\alpha,\beta)$ for every $t\in\bbR_+$ and, at consecutive times
$0,1$, joint chf
\begin{align}
  \chi(s,t) &= \E\exp(i s X_0 + i t X_1)\notag\\
  &= \E\exp(i (s+\rho t) X_0 + i t \zeta_1)\notag\\
  &= \Big[\frac{(1-i(s+\rho t)/\beta)~(1-it/\beta)}
               {1-it\rho/\beta}\Big]^{-\alpha}. \label{e:ar1-chf}
\end{align}
Since this is asymmetric in $s,t$, the process is not time-reversible; this
is also evident from the observation that 
\[\P[X_t\ge\rho X_{t-1}] = \P[\zeta_t\ge0] = 1 >
 \P[\zeta_t\le X_{t-1}(1-\rho^2)/\rho] = \P[X_{t-1}\ge\rho X_t].
\]
This process has marginal distribution $X_t\sim \Ga(\alpha,\beta)$ at all
times $t$, and has autocorrelation $\Corr\big(X_s, X_t) =\rho^{|s-t|}$
(easily found from either \eqref{e:ar1} or \eqref{e:ar1-chf}).  It is
clearly Markov (from \eqref{e:ar1}), and can be shown to have
infinitely-divisible (ID) multivariate marginal distributions of all
orders, since the $\{\zeta_t\}$ are ID.

\need2
\subsection{Thinned Gamma Process}\label{ss:thin}

If $X\sim\Ga(\alpha_1,\beta)$ and $Y\sim\Ga(\alpha_2,\beta)$ are
independent, then $Z:=X+Y\sim\Ga(\alpha_+,\beta)$ and $U:=X/Z\sim
\Be(\alpha_1,\alpha_2)$ are also independent (where $\alpha_+:=\alpha_1
+\alpha_2$), because under the change of variables $x=u z$, $y=(1-u)z$
with Jacobian $J(u,z)=z$,
\begin{align*}
  f(u,z) 
&= \set{\frac{\beta^{\alpha_1}}
       {\Gamma(\alpha_1)} x^{\alpha_1-1} e^{-\beta x}}
   \set{\frac{\beta^{\alpha_2}}
       {\Gamma(\alpha_2)} y^{\alpha_2-1} e^{-\beta y}}\,J(u,z)\\
&= \set{\frac{\beta^{\alpha_+}}{\Gamma(\alpha_1)\Gamma(\alpha_2)}}
   u^{\alpha_1-1}(1-u)^{\alpha_2-1} z^{\alpha_+-2} e^{-\beta z}z \\
&= \set{\frac{\Gamma(\alpha_+)}{\Gamma(\alpha_1)\Gamma(\alpha_2)}
   u^{\alpha_1-1}(1-u)^{\alpha_2-1}}\quad
   \set{\frac{\beta^{\alpha_+}}{\Gamma(\alpha_+)} z^{\alpha_+-1} e^{-\beta z}}.
\end{align*}
Thus if $Z\sim\Ga(\alpha_1+\alpha_2,\beta)$ and
$U\sim\Be(\alpha_1,\alpha_2)$ are independent then
$X=UZ\sim\Ga(\alpha_1,\beta)$ and $Y=(1-U)Z\sim\Ga(\alpha_2,\beta)$ are
independent too.
% Write $X\sim\Ga(\alpha,\beta)$ as the sum $X=\xi+\eta$ of independent
% random variables $\xi\sim\Ga(\alpha\rho,\beta)$ and $\eta\sim\Ga
% (\alpha\omr, \beta)$, where $\omr:=(1{-}\rho)$.  The conditional
% distribution for $\xi$ given $X$ is the beta $\Be(\alpha \rho,\alpha\omr)$
% rescaled to the interval $[0,X]$, with pdf
% \begin{align*}
% g(\xi\mid x) &= \frac{\Gamma(\alpha)\,x^{1-\alpha}}
%                  {\Gamma(\alpha\rho)\Gamma(\alpha\omr)}
%                  \xi^{\alpha\rho-1}(x-\xi)^{\alpha\omr-1}
%                  \bone{0<\xi<x}.
% \end{align*}
Let
\begin{align*}
X_0&\sim\Ga(\alpha,\beta)\\
\intertext{and, for $t\in\bbN$ (or $t\in-\bbN$, resp.) set} 
X_{t} &:= \xi_t + \zeta_t
\intertext{where}
\xi_t &:=B_t\cdot X_{t-1},\qquad B_t\sim\Be(\alpha\rho, \alpha\omr),\\
\zeta_t&\sim\Ga(\alpha\omr,\beta)
\intertext{(or $\xi_t =B_t\cdot X_{t+1}$, resp.), where $\omr:=(1{-}\rho)$
  and all the $\set{B_t}$ and $\set{\zeta_t}$ are independent.}
\end{align*}
Then, by induction, $\xi_t\sim\Ga(\alpha\rho,\beta)$ and $\zeta_t\sim
\Ga(\alpha\omr,\beta)$ are independent, with sum $X_t\sim\Ga(\alpha,
\beta)$.  Thus $\{X_t\}$ is a Markov process with Gamma univariate marginal
distribution $X_t\sim\Ga(\alpha, \beta)$, now with symmetric joint chf
\begin{align}
         \chi(s,t) &= \E\exp(i s X_0 + i t X_1)\notag\\
  &= \E\exp\set{i s (X_0-\xi_1) + i (s+t)\xi_1 + i t \zeta_1}\notag\\
  &=(1-is/\beta)^{-\alpha\omr}
    (1-i(s+t)/\beta)^{-\alpha\rho}
    (1-it/\beta)^{-\alpha\omr}
    \label{e:thin-chf}
\end{align}
and autocorrelation $\Corr\big(X_s, X_t)=\rho^{|s-t|}$.  The process of
passing from $X_{t-1}$ to $\xi_t=X_{t-1}B_t$ is called \emph{thinning}, so
$X_t$ is called the \emph{thinned gamma process}.  A similar construction
is available for any ID marginal distribution.

\subsection{Random Measure Gamma Process}\label{ss:rm}

Let $\cG(dx\,dy)$ be a countably additive random measure that assigns
independent random variables $\cG(A_i)\sim\Ga(\alpha|A_i|,\beta)$ to
disjoint Borel sets $A_i\in\cB (\bbR^2)$ of finite area $|A_i|$ (this is
possible by the Kolmogorov consistency conditions, and is illustrated in
the Appendix) and, for $\lambda:=-\log\rho$, consider the collection of
sets:
\[ G_t :=\set{(x,y):~x\in\bbR, ~0 \le y < \lambda e^{-2\lambda |t-x|}}\]
\begin{figure}[!htp]
\begin{center}
\includegraphics[height=70mm]{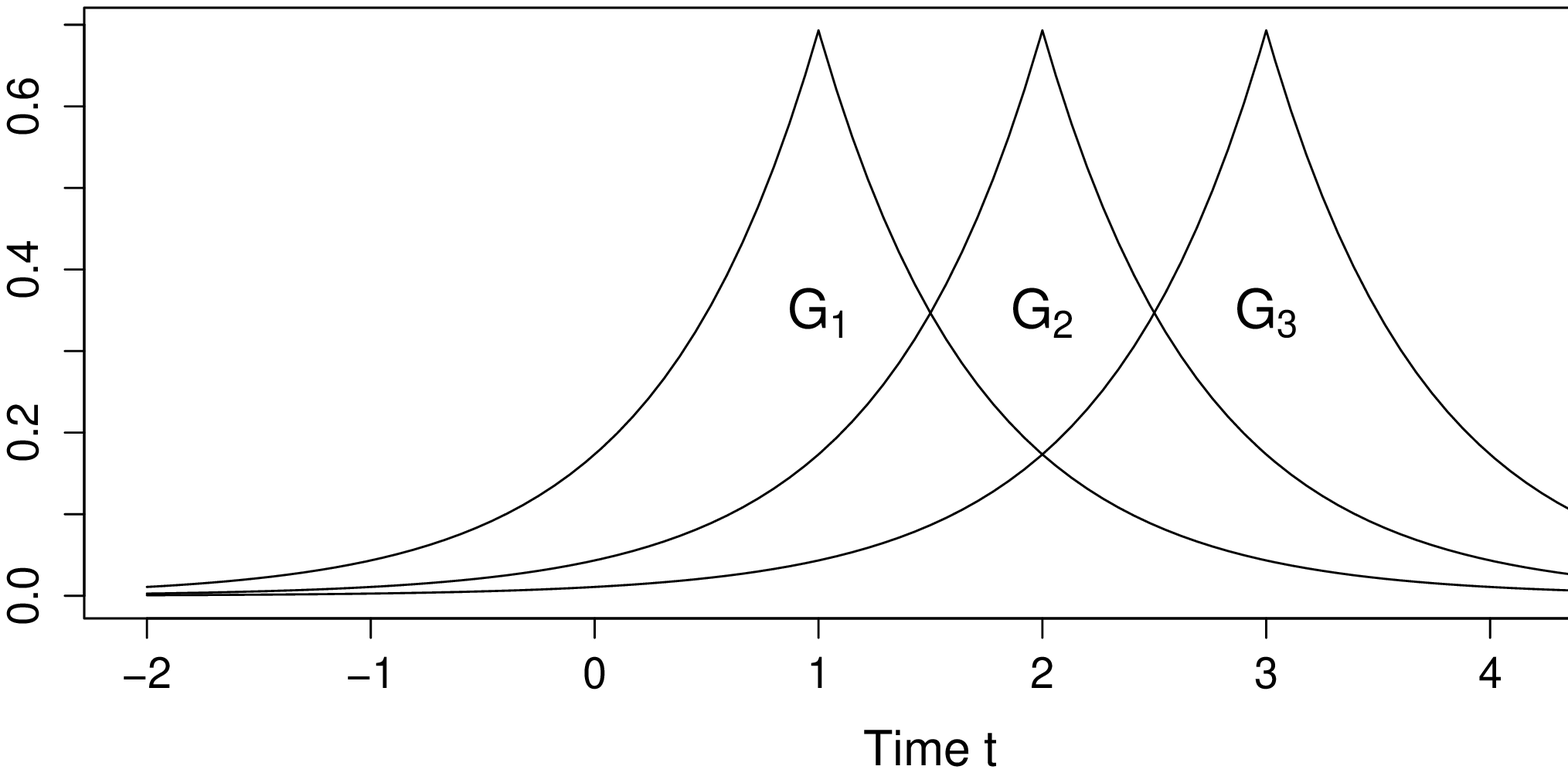}
\vspace*{-5mm}
\caption{\label{f:Gt}Random measure construction of process $X_t=\cG(G_t)$}
\end{center}
\vspace*{-5mm}
\end{figure}%
shown in \Fig{f:Gt} whose intersections have area $|G_s\cap G_t| =
e^{-\lambda|s-t|}$.  For $t\in\cT=\bbR$, set \iffalse
\begin{subequations}\label{e:meas}
\begin{align}
 X_t &:= \cG(G_t)\label{e:meas.X}
\intertext{for the set}
 G_t &:=\set{(x,y):~x\in\bbR, ~0 \le y < \lambda~e^{-2\lambda
    |t-x|}}\label{e:meas.G}
\end{align}\end{subequations}
\else
\begin{equation}\label{e:meas}
 X_t := \cG(G_t).
\end{equation}
\fi 
For any $n$ times $t_1<t_2<\dots<t_n$ the sets $\set {G_{t_i}}$ partition
$\bbR^2$ into $n(n+1)/2$ sets of finite area (and one with infinite area,
$(\cup G_{t_i})^c$), so each $X_{t_i}$ can be written as the sum of some
subset of $n(n+1)/2$ independent Gamma random variables.  In particular,
any $n=2$ variables $X_s$ and $X_t$ can be written as
\[ X_s=\cG(G_s\backslash G_t)+\cG(G_s \cap G_t),\qquad
   X_t=\cG(G_t\backslash G_s)+\cG(G_s \cap G_t)
\]
just as in the thinning approach of \Sec{ss:thin}, so both 1-dimensional and
2-dimensional marginal distributions for the random measure process coincide
with those for the thinning process.  Again the joint chf is
\begin{align}
         \chi(s,t) &= \E\exp(i s X_0 + i t X_1)\notag\\
  &=(1-is/\beta)^{-\alpha\omr}
    (1-i(s+t)/\beta)^{-\alpha\rho}
    (1-it/\beta)^{-\alpha\omr}
    \label{e:rm-chf}
\end{align}
and the autocorrelation is $\Corr\big(X_s, X_t)= \exp \big(-\lambda
|s-t|\big)$ or, for integer times, $\rho^{|s-t|}$ for $\rho
:=\exp(-\lambda)$.  The distribution for consecutive \emph{triplets}
differs from those of the Thinned Gamma Process, however, an illustration
that the thinning process is Markov but the random measure is not.  The
Random Measure process does feature infinitely-divisible (ID) marginal
distributions of all orders, while the thinned process does not.

\subsection{The Markov change-point Gamma Process}\label{ss:mark-cp}
Let $\{\zeta_n:~n\in\bbZ\}\iid\Ga(\alpha,\beta)$ be iid Gamma random
variables and let $N_t$ be a standard Poisson process indexed by $t\in\bbR$
(so $N_0=0$ and $(N_t-N_s)\sim\Po(t-s)$ for all $-\infty<s<t<\infty$, with
independent increments), and set
\[ X_t := \zeta_n,\qquad n={N_{\lambda t}}.\]
Then each $X_t\sim\Ga(\alpha,\beta)$ and, for $s,t\in\bbR$, $X_s$ and $X_t$
are either identical (with probability $\rho^{|s-t|}$) or independent---
reminiscent of a Metropolis MCMC chain.  The chf is
\begin{align}
 \chi(s,t) &= \E\exp(i s X_0 + i t X_1)\notag\\
           &= \rho \big(1-i(s+t)/\beta\big)^{-\alpha}
            + \omr (1-is/\beta)^{-\alpha}(1-it/\beta)^{-\alpha}\label{e:met-chf}
\end{align}
and once again the marginal distribution is $X_t\sim\Ga(\alpha,\beta)$ and
the autocorrelation function is $\Corr\big(X_s, X_t)=\rho^{|s-t|}$.

\subsection{The Squared O-U Gamma Diffusion}\label{ss:OU}
Let $\{Z_i\}\iid\OU(\lambda/2,1)$ be independent Ornstein-Uhlenbeck
velocity processes, mean-zero Gaussian processes with covariance
$\Cov\big(Z_i(s), Z_j(t)\big) =\exp\big(-{\frac\lambda2}|s-t|\big)
\delta_{ij}$, and set
\[ X_t := \frac{1}{2\beta}\sum_{i=1}^n Z_i(t)^2 \]
for $n\in\bbN$ and $\beta\in\bbR_+$.  Note $Z_i(t)\sim\No(0,1)$, so $\E
Z_i(t)^2=1$ and $\E Z_i(t)^4=3$; it follows that $\E Z_i(s)^2
Z_i(t)^2=1+2\exp(-\lambda|s-t|)$.  Then $X_t\sim\Ga(\alpha,\beta)$ for
$\alpha=n/2$, with $\E X_s=\alpha/\beta$ and
\begin{align*}
  \E X_s X_t &= \frac{1}{4\beta^2}\set{n\big(1+2\exp(-\lambda|s-t|)\big)
  +n(n-1)\strut}\\
     &= \frac{\alpha^2+\alpha\exp\big(-\lambda|s-t|\big)}{\beta^2}
\end{align*}
so the autocovariance is $\Cov\big(X_s,X_t) =
\frac{\alpha}{\beta^2}~e^{-\lambda|s-t|}$ and the autocorrelation at
integer times is $\Corr\big(X_s, X_t)=\rho^{|s-t|}$ for $\rho :=\exp
(-\lambda)$.  The chf at consecutive integer times is
\begin{align}
         \chi(s,t) &= \E\exp(i s X_0 + i t X_1)\notag\\
%                  &= \big[(1-is/\beta)(1-it/\beta)+\rho
%                     st/\beta^2\big]^{-\alpha}\label{e:ou-chf}
                   &= \big(1-i(s+t)/\beta
                      -st(1-\rho)/\beta^2\big)^{-\alpha},\label{e:ou-chf}
\end{align}
distinct from \eqref{e:ar1-chf}, \eqref{e:thin-chf}=\eqref{e:rm-chf}, and
\eqref{e:met-chf}, so this process is new.  It\^o's formula is used in
\citep{Wolp:Brow:2017} to show that $X_t$ has stochastic differential
equation (SDE) representation
\begin{align}
   X_t &= X_0 - \int_0^t 2\lambda\big(X_s-\alpha/\beta\big)\,ds
           + 2\sqrt{\lambda/\beta} \int_0^t \sqrt{X_s}\,dW_s\label{e:sde}
\end{align}
and hence has generator $\gA\phi(x) = (\partial/\partial\eps)
\E[\phi(X_{t+\eps})\mid X_t=x]\big|_{\eps=0}$ given by
\begin{align}
\gA \phi(x) &= -2\lambda(x-\alpha/\beta)\phi'(x) + (2\lambda/\beta) x
                \phi''(x),\label{e:OU-gen}
\end{align}
which we will use to distinguish this process from that of \Sec{ss:cont}.
While the construction above required half-integer values for $\alpha$,
\eqref {e:ou-chf} is positive-definite and the SDE \eqref{e:sde} has a
unique strong solution for all $\alpha>0$, so a time-reversible stationary
Markov diffusion process exists with this distribution.  
\citet [Eqn (17)]{Cox:Inge:Ross:1985}, building on \citep{Fell:1951}, 
found the transition kernel for this process:
\begin{align*}
  f(y,t\mid x,s) &= c e^{-u-v} \Big(\frac v u\Big)^{(\alpha-1)/2}
                      I_{(\alpha-1)}\big(2\sqrt{u v}\big),
\end{align*}
where
\[
  c := \beta/\big[1-e^{-2\lambda|t-s|}\big],\qquad
  u := c y e^{-2\lambda|t-s|},\qquad
  v := c x.
\]
where $I_q(x)$ denotes the modified Bessel function of the first kind of
order $q$.  With this one can construct likelihood functions and generate
posterior samples, MLEs, \etc. for the parameters of this process.
\citeauthor{Cox:Inge:Ross:1985} show that the process $X_t$ is strictly
positive at all times if $\alpha\ge1$, but occasionally reaches zero for
$\alpha<1$.
% Feller a = my \alpha/\beta
% Feller b = my 2\lambda
% Feller c = my 2(\lambda+1)\alpha/\beta

\subsection{Continuously Thinned Gamma Process}\label{ss:cont}
Pick a large integer $n$ and set $\eps:=1/n$, $q:=\exp(-\lambda\eps)$, and
$p:=1-q=\lambda\eps+o(\eps)$.  Draw $X_0\sim\Ga(\alpha,\beta)$ and, for
integers $i,j\in\bbN$, draw independently
\[\zeta_i\sim\Ga(\alpha p, \beta) \qquad
      b_j\sim\Be(\alpha p, \alpha q). \]
Set:
\begin{alignat*}5
X_0&=X_0\\
X_\eps&=X_0 (1-b_1)&\,+\,&\zeta_1\\
X_{2\eps}&=X_\eps\, (1-b_2)&&&\,+\,&\zeta_2\\
         &=X_0 (1-b_1) (1-b_2) &\,+\,&\zeta_1 (1-b_2) &\,+\,& \zeta_2\\
X_{3\eps}&=X_0 (1-b_1) (1-b_2) (1-b_3) 
         &\,+\,&\zeta_1 (1-b_2) (1-b_3) &\,+\,&\zeta_2 (1-b_3) &\,+\,& \zeta_3\\
\end{alignat*}
and, in general,
\begin{subequations}
\begin{align}
  X_{k\eps}&=X_0 \prod_{j=1}^k (1-b_j) + \sum_{i=1}^k \left\{\zeta_i
                \prod_{i<j\le k} (1-b_j)\right\}.\label{e:Xt-Sum}
\intertext{In the limit as $n\to\infty$ and $k\eps\to t$ the products
  converge to the product integral of the beta process introduced by
  \citet[\S3] {Hjor:1980} \citep[and described lucidly by][\S2]
  {Thib:Jord:2007} and the sum to an ordinary gamma stochastic integral,}
   X_t &=X_0 \prod_{s\in(0,t]} [1-dB(s)] + 
             \int_0^t \left\{\prod_{s\in(r,t]} [1-dB(s)]\right\}\,\zeta(dr),
             \label{e:Xt-SI} 
\end{align}
\end{subequations}
where $\zeta(dr)\sim\Ga\big(\alpha\lambda\, dr,\beta\big)$ is a Gamma random
measure and $B(s)\sim\BP\big(\alpha,\lambda\,ds\big)$ is a Beta process, \ie,
an SII L{\'e}vy process with L{\'e}vy measure
\[ \nu_B(du)= \lambda\alpha\,u^{-1}(1-u)^{\alpha-1}\,\bone{0<u<1}\,du \]
with constant ``concentration function'' $\alpha(s)\equiv\alpha$ and
translation-invariant ``base measure'' $\lambda(ds)=\lambda ds$.  The
product integral can be written as a ratio
\begin{align*}
\prod_{s\in(r,t]} [1-dB(s)] &= \frac{1-F(t)}{1-F(r)}
\end{align*}
where $F(t)=\prod_{s\in(t,\infty)} [1-dB(s)]$ satisfies
\begin{align}
\frac{dF(t)}{1-F(t)} &= dB(t),\qquad
      B_t = \int_{(0,t]} \frac{dF(s)}{1-F(s)}. \label{e:dFdB}
\end{align}

\subsubsection{Generator}\label{s:gen}
The Gamma process of \Eqn{e:Xt-SI} is stationary and Markov, with generator
\begin{align}
\gA \phi(x) = &\int_0^\infty \big[\phi(x+u)-\phi(x)\big]\,\alpha\lambda
          u^{-1}e^{-\beta u}\,du\notag\\
          + &\int_0^x \big[\phi(x-u)-\phi(x)\big]\,\alpha\lambda
          u^{-1}(1-u/x)^{\alpha-1} \,du\label{e:cthin-gen}
\end{align}
Because this differs from \eqref{e:OU-gen} (and in particular because it is
a non-local operator, showing $X_t$ has jumps), this process is new.
% More generally, for an ID process with L{\'e}vy measure $\nu(du)$
% satisfying $\int_{-1}^1|u|\nu(du)<\infty$ and marginal pdf $f(x)$, I
% conjecture that there exists a stationary process with generator
% \begin{align}
% \gA \phi(x) = &\lambda\int_\bbR \big[\phi(x+u)
%                                    -\phi(x)\big]\,\nu(du)\label{e:gen}\\
%           + &\lambda\int_\bbR[\phi(x-u)-\phi(x)\big]\,
%             \frac{f(x-u)}{f(x)}\nu(du)\notag
% \end{align}
Once again the one-dimensional marginal distributions are
$X_t\sim\Ga(\alpha, \beta)$ and the autocorrelation is $\Corr(X_s,X_t)
=\exp\big(-\lambda |s-t|\big)$.

\section{Discussion}\label{s:disc}
We have now constructed six distinct processes that share the same
univariate marginal distribution and autocorrelation function, but which
all differ in their $n$-variate marginal distributions for $n\ge3$.
%  I don't know if their multivariate extremal dependencies differ in
%  interesting ways, but their extremal indices
% \footnote
% {The extremal index is denoted ``$\kappa$'' by \citet [(1.1)]
%   {Ramo:Ledf:2009} and ``$\chi$'' by \citet [\S3.3.1] {Cole:Heff:Tawn:1999},
%   while \citet [(5.3)] {Ledf:Tawn:1996} expand the asymptotic indepence range
%   by parametrizing $\P[Y>u\mid X>u] \asymp u^{1-1/\eta}$ for
%   $\half\le\eta\le1$.}
% \[ \lim_{u\to\infty} \P[ X_1\ge u\mid X_0\ge u] = \rho =
%   \lim_{u\to\infty} \P[ X_0\ge u\mid X_1\ge u]
% \]
% are all identical.  
Some are Markov, some not; some are time-reversible, some not; some have ID
marginal distributions of all orders, some don't.  Similar methods can be
used to construct $\AR1$-like processes with \emph{any} ID marginal
distribution, such as those listed on \pg \pageref{t:id}; only in the two
cases of Gaussian and Poisson do all these constructions coincide.  Many of
these are useful for modeling time-dependent phenomena whose dependence
falls off over time, but for which traditional Gaussian methods are
unsuitable because of heavy tails, or integer values, or positivity, or for
other reasons.  I know of very little work (yet!) exploring inference for
processes like these; a beginning appears in \citep[\S3]{Wang:2013}.  Code
in \texttt{R} to generate samples from each of these six processes is
available on request.

The author would like to thank Lawrence D. Brown for many fruitful
discussions about ID processes, and Victor Pe\~na for helping develop
methods of distinguishing these six processes from observations.

% \hide
\hide{The spectral measures for $X_0,X_1$, using the $L_1$ ball, are Borel
  measures on the one-simplex $\Del1 =\set{(x,y) \in\bbR^2_+: ~x+y=1}$.
  For the Thinning, Random Measure, and Markov change-point processes, $H$
  is discrete with
\begin{align*}
  H\big(\{\sigma\}\big)
   &= \begin{cases}
      \frac{1-\rho}2& \sigma=(0,1)\\
      \rho&           \sigma=(\half,\half)\\
      \frac{1-\rho}2& \sigma=(1,0)
      \end{cases}
\end{align*}
The $\AR1$ spectral measure also puts mass $(1-\rho)/2$ at $(0,1)$, but puts
the entire remaining mass $(1+\rho)/2$ at the point $(\frac{1}{1+\rho},
\frac{\rho}{1+\rho})$.  Each measure has mean $\int_{\Del1}
\sigma H(d\sigma)=(\half,\half)$.  I have no idea what happens for the
continuously-thinned process of \Sec{ss:cont}.

\section{Trivariate Spectra}\label{s:tri}
For some of these processes we can compute spectral measures on the
two-simplex $\Del2$ for triplets $(X_r,X_s,X_t)$ for $r,s,t\in\cT$ (at
least for consecutive $r,s,t\in\bbZ$), and probably with a bit more work we
could compute spectral measures on $\Del{n-1}$ for $n$-tuples.  Here's a
start, intended to discover whether or not any interesting differences
emerge.

\subsection{Trivariate Spectrum for $\AR1$ Gamma}\label{ss:tri-ar1-cau}

At times $t\in(0,a,a+b)\in\cT=\bbR$ for $a,b\in\bbR_+$, we can write the 
$\AR1$ Gamma process variates $X=X_0,Y=X_a,Z=X_{a+b}$ in the form
\[ X, \qquad Y=r^a X+\eta, \qquad Z= r^{a+b} X + r^b \eta + \zeta \] for
$r:=\rho^{1/\alpha}$ and independent $X\sim\Ga(\alpha,\beta)$,
$\eta\sim\GA{\alpha,\beta(1-\rho^a)}, \zeta\sim\GA{\alpha,\beta(1-\rho^b)}$.  The
spectral measure will be concentrated on three points in $\Del2$,
depending on which of $X,\eta,\zeta$ contributed the extreme value to the sum
$(X+Y+Z)=X(1+\rho^a+\rho^{a+b})+\eta(1+\rho^b)+\zeta$: \iffalse
\begin{align*}
H(\{\sigma\})=\begin{cases}
\frac{1+\rho^a+\rho^{a+b}}3& \sigma=\cet
            {\frac1       {1+\rho^a+\rho^{a+b}},
             \frac{\rho^a}   {1+\rho^a+\rho^{a+b}},
             \frac{\rho^{a+b}}{1+\rho^a+\rho^{a+b}}}\\
\frac{(1-\rho^a)(1+\rho^b)}3& \sigma=\cet
            {0,\frac{1}{1+\rho^b},\frac{\rho^b}{1+\rho^b}}\\
\frac{1-\rho^b}3& \sigma=\cet{0,0,1}
\end{cases}
\end{align*}
\else
\begin{align*}
H(\{\sigma\})=\begin{cases}
\frac{1+\rho^a+\rho^{a+b}}3& \sigma=\frac
            {\cet{1,~\rho^a,~ \rho^{a+b}}}{1+\rho^a+\rho^{a+b}}\\
\frac{(1-\rho^a)(1+\rho^b)}3& \sigma=\frac {\cet{0,~1,~\rho^b}}{1+\rho^b}\\
\frac{1-\rho^b}3& \sigma=\cet{0,0,1}
\end{cases}
\end{align*}
\fi
with one point in the interior, one on an edge, and one at a vertex.  For
consecutive times (so $a=b=1$) this simplifies to
\iffalse
\begin{align*}
H(\{\sigma\})=\begin{cases}
\frac{1+\rho+\rho^2}3& \sigma=\cet
            {\frac1        {1+\rho+\rho^2},
             \frac{\rho}   {1+\rho+\rho^2},
             \frac{\rho^2} {1+\rho+\rho^2}}\\
\frac{1-\rho^2}3& \sigma=\cet
            {0,\frac{1}{1+\rho},\frac{\rho}{1+\rho}}\\
\frac{1-\rho}3& \sigma=\cet{0,0,1}
\end{cases}
\end{align*}
\else
\begin{align*}
H(\{\sigma\})=\begin{cases}
\frac{1+\rho+\rho^2}3& \sigma=\frac {\cet{1,~\rho,~\rho^2}}
                                    {1+\rho+\rho^2}\\
     \frac{1-\rho^2}3& \sigma=\frac {\cet{0,~1,~\rho}}
                                    {1+\rho}\\
       \frac{1-\rho}3& \sigma=\cet  {0,0,1}
\end{cases}
\end{align*}
\fi

\subsection{Trivariate Spectrum for Thinned Gamma}
  \label{ss:tri-thin-cau}
The thinned Gamma process at three consecutive times may be written
in the form
\begin{alignat*}2
X = X_1 &= \xi + \eta\\
Y = X_2 &= \xi + \zeta &&= \eps + \phi\\
Z = X_3 &              &&= \eps + \psi
\end{alignat*}
for $\xi,\eps\sim\Ga(\alpha,\beta\rho)$ and $\eta,\zeta,\phi, \psi\sim
\GA{\alpha,\beta\omr}$, with $\set{\xi,\eta,\zeta}$ independent and $\set{\eps
  ,\phi ,\psi}$ independent.  The spectral measure is discrete with six mass
points: the simplex's three vertices, the midpoints of two of its edges, and
its barycenter:
\[
H\big(\{\sigma\}\big) =
\left\{\begin{array}{clcl}
\Strut \frac{1-\rho}3         &\sigma=(0,0,1),\qquad\qquad&
       \frac{2\rho(1-\rho)}3  &\sigma=(0,\half,\half)\\
\Strut \frac{(1-\rho)^2}3     &\sigma=(0,1,0),&
       \frac{2\rho(1-\rho)}3  &\sigma=(\half,\half,0)\\
\Strut \frac{1-\rho}3         &\sigma=(1,0,0),&
       \rho^2                 &\sigma=(\thd,\thd,\thd)
\end{array}\right.
\] 
Once again $\int_{\Del2}\sigma H(d\sigma) = \cet{\textstyle
  \frac13,\frac13,\frac13}$.

\subsection{Trivariate Spectrum for Random Measure Gamma}
  \label{ss:tri-rm-cau}
The random measure Gamma process at three times $t\in\set{0,a,a+b}$ may be
written 
\[\begin{array}{lll}
X &= X_0     &= \zeta_{100}+\zeta_{110}+\zeta_{111}\\
Y &= X_a     &= \zeta_{010}+\zeta_{110}+\zeta_{011}+\zeta_{111}\\
Z &= X_{a+b} &= \zeta_{001}+\zeta_{011}+\zeta_{111}
\end{array}
\]
for independent Gamma random variables
\begin{alignat*}2
\zeta_{001}&\sim\GA{\alpha,\beta(1-\rho^b)}&\quad
\zeta_{011}&\sim\GA{\alpha,\beta(1-\rho^a)\rho^b}\\
\zeta_{010}&\sim\GA{\alpha,\beta(1-\rho^a)(1-\rho^b)}&
\zeta_{110}&\sim\GA{\alpha,\beta \rho^a(1-\rho^b)}\\
\zeta_{100}&\sim\GA{\alpha,\beta(1-\rho^a)}&
\zeta_{111}&\sim\GA{\alpha,\beta \rho^{a+b}}
\end{alignat*}
Now for large $u$ the probability that $X+Y+Z=
(\zeta_{001}+\zeta_{010}+\zeta_{100}) +2(\zeta_{011}+\zeta_{110})
+3(\zeta_{111})>u$ is approximately (with error $O(u^{-2})$)
\[\P[(X+Y+Z)>u] \approx 3\gamma/\pi u\]
(by \eqref{e:aSx} with $\alpha=1$, $\beta=0$, and $c_\alpha=(2/\pi)$) and,
conditional on that event, the distribution of $(X,Y,Z)/(X+Y+Z)$ is
concentrated on six points: \iffalse
\[
H\big(\{\sigma\}\big) =
\left\{\begin{array}{clcl}
\Strut (1-\rho^b)/3       &\sigma=(0,0,1),\quad&
        2(1-\rho^a)\rho^b/3&\sigma=(0,\half,\half)\\
\Strut (1-\rho^a)(1-\rho^b)/3&\sigma=(0,1,0),&
        2\rho^a(1-\rho^b)/3&\sigma=(\half,\half,0)\\
\Strut (1-\rho^a)/3       &\sigma=(1,0,0),&
        \rho^{a+b}      &\sigma=(\thd,\thd,\thd)
\end{array}\right.
\] 
\else
\[
H\big(\{\sigma\}\big) =
\left\{\begin{array}{clcl}
\Strut \frac{1-\rho^b}3       &\sigma=(0,0,1),\qquad\qquad&
        \frac{2(1-\rho^a)\rho^b}3&\sigma=(0,\half,\half)\\
\Strut \frac{(1-\rho^a)(1-\rho^b)}3&\sigma=(0,1,0),&
       \frac{2\rho^a(1-\rho^b)}3&\sigma=(\half,\half,0)\\
\Strut \frac{1-\rho^a}3       &\sigma=(1,0,0),&
        \rho^{a+b}      &\sigma=(\thd,\thd,\thd)
\end{array}\right.
\] 
\fi 
When $a=b=1$, this is identical to that of the thinned Gamma of \Sec
{ss:tri-thin-cau}; for any $a$ and $b$, this is identical with the spectral
measure for the Markov change-point process of \Sec{ss:mark-cp}.  Evidently
there is little in their 3-dimensional extremes to separate the thinned,
random measure, and Markov change-point Gamma processes.  I'm not
optimistic that differences would emerge for $n$-tuples of $\alpha$-stable
variables with $\alpha\ne1$ or $n>3$, but they're easy enough to explore.
} % \hide{
\vfill\newpage
\appendix
\section*{Appendix}\label{s:app}
\newcommand{\orr}{{\textstyle\frac{1-\rho}{\rho}}}
\begin{prop}[\citet{Walk:2000}]
The innovations $\zeta_t$ in \Eqn{e:ar1} can be constructed successively
as follows:
\begin{align*}
  \lambda_t\sim\Ga(\alpha,1),\qquad
  N_t\mid\lambda_t\sim\Po\big(\orr \lambda_t\big),\qquad
  \zeta_t\mid N_t&\sim\Ga\big(N_t,{\textstyle\frac\beta\rho}\big).
\end{align*}
\end{prop}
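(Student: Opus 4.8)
The plan is to verify that the random variable $\zeta_t$ produced by the three-stage hierarchy has exactly the characteristic function $\E e^{i\omega\zeta_t}=(1-i\omega/\beta)^{-\alpha}(1-i\rho\omega/\beta)^{\alpha}$ prescribed in \Eqn{e:ar1}; since characteristic functions determine distributions, this suffices. Throughout I assume $\rho\in(0,1)$ (for $\rho=0$ the claim is vacuous, since then $\zeta_t=X_t$), so that $\orr=(1-\rho)/\rho$ and $\beta/\rho$ are finite and positive, and I adopt the convention that $\Ga(0,\cdot)$ denotes the unit point mass at $0$, so that $\zeta_t=0$ on the event $\{N_t=0\}$.

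First I would peel off the innermost layer by iterated conditioning. Given $N_t=n$, the variable $\zeta_t\sim\Ga(n,\beta/\rho)$ has chf $(1-i\omega\rho/\beta)^{-n}$, valid for every integer $n\ge0$ under the stated convention. Given $\lambda_t$, the count $N_t\sim\Po(\orr\lambda_t)$, so by the Poisson probability generating function $\E[z^{N_t}\mid\lambda_t]=\exp\big(\orr\lambda_t(z-1)\big)$, evaluated at $z=(1-i\omega\rho/\beta)^{-1}$ (which has $|z|\le1$, so the defining series converges); hence
\[
\E\big[e^{i\omega\zeta_t}\mid\lambda_t\big]
  =\exp\Big(\orr\,\lambda_t\big[(1-i\omega\rho/\beta)^{-1}-1\big]\Big).
\]
Then I would integrate out $\lambda_t\sim\Ga(\alpha,1)$, using $\E e^{s\lambda_t}=(1-s)^{-\alpha}$ for $\Re s<1$ (by analytic continuation from real $s$). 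Since $(1-i\omega\rho/\beta)^{-1}-1=\tfrac{i\omega\rho/\beta}{1-i\omega\rho/\beta}$ has non-positive real part, the exponent $s:=\orr[(1-i\omega\rho/\beta)^{-1}-1]$ satisfies $\Re s\le0<1$, so the identity applies and $\E e^{i\omega\zeta_t}=(1-s)^{-\alpha}$.

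All that remains is the one-line algebraic simplification
\begin{align*}
1-\orr\cdot\frac{i\omega\rho/\beta}{1-i\omega\rho/\beta}
  &=\frac{(1-i\omega\rho/\beta)-(1-\rho)i\omega/\beta}{1-i\omega\rho/\beta}
   =\frac{1-i\omega/\beta}{1-i\omega\rho/\beta},
\end{align*}
which gives $\E e^{i\omega\zeta_t}=\big[(1-i\omega/\beta)/(1-i\omega\rho/\beta)\big]^{-\alpha}$, precisely the chf in \Eqn{e:ar1}.

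I do not expect any genuine obstacle: the argument is a routine "compound-Poisson mixed over a Gamma" computation followed by a trivial identity. The only points needing a little care are bookkeeping the degenerate $\Ga(0,\cdot)$ atom — equivalently, observing that $\zeta_t$ has a point mass $\P[N_t=0]=\E e^{-\orr\lambda_t}=(1+\orr)^{-\alpha}=\rho^{\alpha}$ at the origin, consistent with the chf tending to $\rho^{\alpha}$ as $|\omega|\to\infty$ — and confirming that the generating-function and mgf manipulations occur inside their domains of convergence, which the bounds $|z|\le1$ and $\Re s\le0$ secure. As an alternative to the chf route, one could instead match L\'evy measures: the hierarchy is manifestly a Poisson superposition, and identifying its jump intensity as $\alpha\big[e^{-\beta u}-e^{-\beta u/\rho}\big]u^{-1}\bone{u>0}$ would recover the L\'evy measure $\nu_\zeta$ recorded in \Sec{ss:ar1}; but the characteristic-function calculation above is the shorter path.
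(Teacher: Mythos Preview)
Your proposal is correct and follows essentially the same route as the paper's own proof: compute $\E[e^{i\omega\zeta_t}\mid N_t]$, sum against the Poisson pmf to obtain $\E[e^{i\omega\zeta_t}\mid\lambda_t]$, then integrate against the $\Ga(\alpha,1)$ law of $\lambda_t$ and simplify. The only differences are cosmetic---you add explicit convergence checks ($|z|\le1$, $\Re s\le0$) and the remark about the atom at $0$, which the paper omits.
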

\begin{proof}
For $\omega\in\bbR$, 
\begin{alignat*}2
\E e^{i\zeta_t\omega}&\mid& N_t &= (1-i\omega\rho/\beta)^{-N_t}\\
\E e^{i\zeta_t\omega}&\mid& \lambda_t 
  &= \sum_{n=0}^\infty 
   (1-i\omega\rho/\beta)^{-n}
   \big(\orr\lambda_t\big)^n
   \exp\big(-\orr\lambda_t\big)/n!\\
  &&&= \exp\Big(i\frac{(1-\rho)\omega}{\beta-i\rho\omega}\lambda_t\Big)\\
\E e^{i\zeta_t\omega}&&
  &= \Big[1-i\frac{(1-\rho)\omega}{\beta-i\rho\omega}\Big]^{-\alpha}
   = \Big[\frac{\beta-i\omega}{\beta-i\rho\omega}\Big]^{-\alpha}.
%   = \Big[\frac{1-i\omega/\beta}{1-i\rho\omega/\beta}\Big]^{-\alpha}
\end{alignat*}
\end{proof}
\subsection*{Poisson and Gamma SII Processes}\label{ss:poi-gam}
%\subsubsection*{Poisson Variables \& SII Processes}
The chf for a Poisson random variable $X\sim\Po(\nu)$ is
\begin{subequations}
\begin{align}
  \chi_X(\theta)=\E[e^{i\theta X}]
                &=\sum_{k=0}^\infty e^{i\theta k}\set{\frac{\nu^k}{k!}e^{-\nu}}
                 =e^{(e^{i\theta}-1)\nu}\notag
\intertext{so for any $u\in\bbR$ the re-scaled random variable $Y:=uX$ has chf}
  \chi_{uX}(\theta)&=\E[e^{i\theta uX}]=\chi_X(u\theta)\notag\\
                &=e^{(e^{i\theta u}-1)\nu}\notag
\intertext{and a linear combination $Y:=\sum u_j X_j$ of independent
  $X_j\sim\Po(\nu_j)$ has chf}
  \chi_Y(\theta)&=\prod_j\set{e^{(e^{i\theta u_j}-1)\nu_j}}\notag\\
                &=\exp\set{\sum_j {(e^{i\theta u_j}-1)\nu_j}}\label{e:poi-lk-sum}\\
                &=\exp\set{\int_\bbR {(e^{i\theta u}-1)\nu(du)}}\label{e:poi-lk-int}
\end{align}
\end{subequations}
for the discrete measure
\[
        \nu(du) = \sum \nu_j\delta_{u_j}(du)
\]
that assigns mass $\nu_j$ to each point $u_j$, provided the sum in \eqref
{e:poi-lk-sum} converges.  Of course the sum converges if it has only
finitely-many terms, or even if there are infinitely-many with $\sum\nu_j
<\infty$ (because $|e^{i\theta u}-1|\le2$), but that condition isn't
actually necessary.  Since also $|e^{i\theta u}-1|\le|\theta u|$, the
random variable $Y$ will be well-defined and finite provided that
\begin{subequations}
\begin{align}
\sum_j \big(1\wedge|u_j|\big)\nu_j&<\infty\label{e:l1-lk-disc}
\intertext{or, in integral form,}
\int_\bbR \big(1\wedge|u|\big)\nu(du)&<\infty.\label{e:l1-lk}
\end{align}
\end{subequations}
A random variable with chf of form \eqref{e:poi-lk-sum} for a sequence
satisfying \eqref{e:l1-lk-disc} has a ``compound Poisson'' distribution;
one with the more general chf of form \eqref{e:poi-lk-int} for a measure
satisfying \eqref{e:l1-lk} is called Infinitely Divisible, or ID.

\subsubsection*{ID Distributions}\label{sss:ID}

For any $\sigma$-finite measure satisfying \eqref{e:l1-lk} it's easy to
make a stochastic process with stationary independent increments of the
more general form of \eqref {e:poi-lk-int}, beginning with a Poisson random
measure $\cN(du\,ds)$ on $\bbR\times \bbR_+$ with intensity measure
$\E\cN(du\,ds)=\nu(du)\,ds$:
\begin{align}
  X_t &:= \iint_{\bbR\times(0,t]} u \cN(du\,ds).
\end{align}
This is a right-continuous independent-increment nondecreasing process that
begins at $X_0=0$ with jumps
$\Delta_t=[X_t-X_{t-}] \equiv [X_t-\lim_{s\nearrow t}X_s]$ of magnitudes
$\Delta_t\in E$ at rate $\nu(E)$ for any Borel $E\subset\bbR$.  The Poisson
process itself is the special case where $\nu(E)=\nu\bone{1\in E}$, with
jumps of magnitude $\Delta_t=1$ at rate $\nu\in\bbR_+$.

%In \citeyear{Khin:Levy:1936} Alexander Ya. Khinchine and Paul L{\'e}vy
\citet{Khin:Levy:1936} showed that a random variable $Y$ has a chf of the
form \eqref {e:poi-lk-int} if and only\footnote{For nonnegative random
  variables this is true as stated, but a slightly more general form is
  necessary for real-valued ID random variables, with a condition on
  $\nu(du)$ somewhat weaker than \eqref {e:l1-lk} (see \eqref{e:l2-lk})---
  if you get interested, ask me about ``compensation''.  This is needed for
  the $\Ca(\delta,\gamma)$ example and, for $\alpha\ge1$, the
  $\St_0(\alpha,\beta,\gamma,\delta)$ and $\SaS (\alpha,\gamma)$ examples
  below.}
if, for every $n\in\bbN$, one can write $Y=\zeta_1+\cdots\zeta_n$ as the
sum of $n$ iid random variables $\zeta_j$.  This property is called
``Infinite Divisibility'' (abbreviated ID), and the processes we have
constructed whose increments have this property are called ``SII''
processes for their stationary independent increments.  Examples of ID
distributions (or SII processes) and their L\'evy measures
include:\par\medskip

\centerline{\begin{tabular}{lLL}\label{t:id}
Poisson&\Po(\lambda)&\nu(du)=\lambda\delta_1(du)\\
Negative Binomial&\NB(\alpha,p)
  &\nu(du)=\sum_{k\in\bbN}\alpha\frac{q^k}{k}\delta_k(du),\qquad 
  q:=(1-p)\\
Gamma&\Ga(\alpha,\beta)&\nu(du)=\alpha u^{-1}e^{-\beta u}\bone{u>0}\,du\\
$\alpha$-Stable&\St_0(\alpha,\beta,\gamma,\delta)&\nu(du)=
  \frac{\alpha\gamma}\pi\Gamma(\alpha)\sin\pat 
  |u|^{-\alpha-1}(1+\beta\sgn u)\,du\\
Symmetric $\alpha$-Stable&\SaS(\alpha,\gamma)&\nu(du)=
  \frac{\alpha\gamma}\pi\Gamma(\alpha)\sin\pat |u|^{-\alpha-1}\,du\\
Cauchy&\Ca(\delta,\gamma)&\nu(du)=\frac\gamma\pi|u|^{-2}\,du.
\end{tabular}}\par

The defining condition for a random variable $Y$ to be ``Infinitely
Divisible'' (ID) is that for each $n\in\bbN$ there must exist iid random
variables $\{\zeta_j:~1\le j\le n\}$ such that $Y$ and $\sum_{j=1}^n
\zeta_j$ have the same distribution.  This is clearly equivalent to the
condition that every power $\chi^\alpha(\omega)$ of the characteristic
function $\chi(\theta):=\E\exp(i\theta Y)$ must \emph{also} be a
characteristic function (\ie, must be positive-definite) for each inverse
integer $\alpha=1/n$, because we can just take $\{\zeta_j\}$ to be iid with
chf $\chi^{1/n}(\omega)$ and verify that their sum has chf $\chi(\omega)$.
Less obvious but also true is that $Y$ is ID if and only if
$\chi^\alpha(\omega)$ is positive-definite for \emph{all} real $\alpha>0$,
and even less obvious is the \citeauthor {Khin:Levy:1936} theorem that
$\chi$ must take the specific form
\begin{subequations}
\begin{align}
\chi(\theta) &= \exp\set{i\theta\delta - \theta^2\sigma^2/2+
                \int_\bbR \bet{e^{i\theta u}-1}\,\nu(du)}
\label{e:lk-l1}
\intertext{for some $\delta\in\bbR$, $\sigma^2\ge0$, and Borel measure
  $\nu$ satisfying $\nu(\{0\})=0$ and \eqref{e:l1-lk} or, a little more
  generally, the form} \chi(\theta) &= \exp\set{i\theta\delta -
  \theta^2\sigma^2/2+ \int_\bbR \bet{e^{i\theta u}-1-i\theta h(u)
  }\,\nu(du)}
\label{e:lk-l2}
\end{align}
for any bounded function $h$ that satisfies $h(u)=u+O(u^2)$ near
$u\approx0$ (like $\arctan u$ or $u\bone{|u|<1}$ or $u/(1+u^2)$) and a
Borel measure $\nu$ satisfying the weaker restriction
\begin{align}
   \int_\bbR \big(1\wedge u^2\big)\nu(du)&<\infty.\label{e:l2-lk}
\end{align}
\end{subequations}
Some properties of ID distributions beyond the scope of these notes, but
covered in \citep{Steu:vHar:2004} (see also \citep{Bose:DasG:Rubi:2002}),
include:
\begin{thm}[S\&vH, Thm 2.13] 
  Let $\chi(\theta)$ be the chf of an Infinitely Divisible distribution.
  Then $(\forall\theta\in\bbR) \set{\chi(\theta)\ne0}$.  Also, if
  $\chi(\theta)$ is analytic in some open domain $\Omega\subset\bbC$, then
  $(\forall\theta\in\Omega) \set{\chi(\theta)\ne0}$.  Thus, ID chfs do not
  vanish on $\bbR$ or anywhere in $\bbC$ that $\chi(\theta)$ is analytic.
\end{thm}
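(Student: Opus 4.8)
The plan is to establish the two claims in turn, using the first (non-vanishing on $\bbR$) to bootstrap the second (non-vanishing on the complex domain of analyticity).

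\textbf{Non-vanishing on $\bbR$.} The quickest route is to quote the Khinchin--L\'evy form \eqref{e:lk-l2}: for every $\theta\in\bbR$ the exponent there is a finite complex number and $\exp$ never vanishes, so $\chi(\theta)\ne0$. I would also record the self-contained argument, since it is the one that survives to other settings. For each $n\in\bbN$ write $\chi=\chi_n^{\,n}$ with $\chi_n$ a chf --- possible precisely because $\chi$ is ID. Then $h_n(\theta):=|\chi_n(\theta)|^2=|\chi(\theta)|^{2/n}$ is itself a chf, namely that of $X_n-X_n'$ for independent $X_n,X_n'$ with common chf $\chi_n$. As $n\to\infty$, $h_n(\theta)\to h(\theta):=\bone{\chi(\theta)\ne0}$ pointwise. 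Since $\chi$ is continuous with $\chi(0)=1$, the limit $h$ equals $1$ on a neighbourhood of the origin, hence is continuous at $0$, so by L\'evy's continuity theorem $h$ is the chf of some law $\mu_0$; but a chf equal to $1$ near $0$ forces $\mu_0=\delta_0$ (from $0=1-\mathrm{Re}\,h(\theta)=\int(1-\cos\theta x)\,\mu_0(dx)$ and nonnegativity of the integrand), so $h\equiv1$, i.e. $\chi$ vanishes nowhere on $\bbR$.

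\textbf{Non-vanishing on the strip of analyticity.} If $\chi$ is analytic at even one real point, then the underlying law $\mu$ has $\E e^{s|X|}<\infty$ for $|s|<\tau$, some $\tau>0$, and $\chi$ extends analytically to the strip $S_\tau:=\{z:|\mathrm{Im}\,z|<\tau\}$ with $\chi(z)=\E e^{izX}$ there. Roots of ID chfs are ID (apply the criterion ``$\chi^\alpha$ positive-definite for all $\alpha>0$'' to $\chi^{1/n}$), so the $n$-th convolution root $\mu_n$ exists; and $\mu_n$ inherits the exponential moments, $\E_{\mu_n}e^{tX}=(\E_\mu e^{tX})^{1/n}<\infty$ for $|t|<\tau$, so $\chi_n(z):=\E_{\mu_n}e^{izX}$ is analytic on $S_\tau$, restricts to the chf $\chi_n$ on $\bbR$, and satisfies $\chi_n^{\,n}=\chi$ throughout $S_\tau$ by the identity theorem. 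Now if $\chi$ has a zero of order $m\ge1$ at some $z_0\in S_\tau$, then $\chi_n$ has a zero of order $m/n$ at $z_0$; since the order of a zero of an analytic function is a nonnegative integer, this forces $n\mid m$ for every $n$, impossible for $m\ge1$. Hence $\chi\ne0$ on $S_\tau$. (Equivalently: the L\'evy exponent in \eqref{e:lk-l2} is itself analytic on $S_\tau$ once the L\'evy measure is known to share the exponential moments of $\mu$, and $\chi=\exp\{\text{exponent}\}$ is then visibly nonvanishing.)

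\textbf{From the strip to a general $\Omega$, and the main obstacle.} Assume $\Omega$ is connected and meets $\bbR$. Apart from points on the lines $\mathrm{Im}\,z=\pm\tau$, $\Omega$ can extend beyond $S_\tau$ only by analytic continuation around isolated boundary singularities (as already happens for $\Ga(\alpha,\beta)$, where $\chi(z)=(1-iz/\beta)^{-\alpha}$ is analytic off a branch cut, or off the single point $-i\beta$ when $\alpha\in\bbN$). To rule out a zero $z_0\in\Omega\setminus S_\tau$ one continues $\chi_n=\chi^{1/n}$ from $S_\tau$ along a path in $\{\chi\ne0\}$ up to a punctured neighbourhood of $z_0$ and reruns the order argument; the delicate point is that this continuation must be single-valued near $z_0$, whereas a ``branch-point zero'' would make it multivalued. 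A small loop about $z_0$ has $\chi$-winding $m$ and would therefore give $\chi_n$ the monodromy factor $e^{2\pi i m/n}\ne1$ when $n>m$; reconciling this with the fact that $z_0$ is a genuine point of analyticity of $\chi$ (so $\chi$, not merely $\chi_n$, is single-valued there) is exactly the bookkeeping I expect to be the main obstacle in a fully careful write-up. If the statement is read as referring to the strip of regularity, or restricted to simply connected $\Omega$, the previous paragraph already gives a complete proof and this difficulty disappears.
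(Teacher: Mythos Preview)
The paper does not prove this theorem at all: it is listed among ``properties of ID distributions beyond the scope of these notes, but covered in \citep{Steu:vHar:2004},'' and only the third of the three quoted results (S\&vH Prop.~2.3) is given a proof. There is therefore nothing in the paper to compare your argument against.

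Assessing your proposal on its own terms: the real-line part is correct and standard, both the L\'evy--Khinchin shortcut and the continuity-theorem argument via $|\chi|^{2/n}\to\bone{\chi\ne0}$. The strip argument is also sound; the inheritance of exponential moments by $\mu_n$ follows from Tonelli, since $\E_\mu e^{tX}=(\E_{\mu_n}e^{tX})^n$ with each factor in $(0,\infty]$, so finiteness on the left forces finiteness of each factor, and then the order-of-zero contradiction goes through cleanly. Your third section is candid about a real issue: the statement as written allows an arbitrary domain $\Omega\subset\bbC$, and analytically continuing the $n$-th roots off the strip does raise monodromy questions. The version actually proved in \citet{Steu:vHar:2004} concerns the strip of regularity, where your argument is already complete; if one reads the paper's $\Omega$ that way (or as simply connected and meeting $\bbR$), there is no gap.
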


\begin{thm}[S\&vH, Thm 9.8] 
  Let $X$ be an infinitely divisible random variable that is not normal or
  degenerate.  Then the two-sided tail of $X$ satisfies
  \[ \lim_{x\to\infty}\frac{-\log\P[|X|>x]}{x\log x} = c\] for a number
  $0\le c< \infty$ given by $c^{-1}=\max[\nu(\bbR_+), \nu(\bbR_-)]$.  An ID
  random variable that is not degenerate has a normal distribution if and
  only if the same limit is $c=\infty$.
\end{thm}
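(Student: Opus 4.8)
The plan is to reduce the two-sided estimate to two one-sided ones and then read the rate off the L\'evy--Khintchine/Poisson representation. Since $-X$ is ID whenever $X$ is, and $\P[|X|>x]=\P[X>x]+\P[X\le-x]$, it suffices to prove that for ID $X$ with L\'evy measure $\nu$
\[ \lim_{x\to\infty}\frac{-\log\P[X>x]}{x\log x}=\frac1{\ell_+},\qquad \ell_+:=\sup(\supp\nu\cap(0,\infty))\in[0,\infty] \]
(with $\sup\emptyset:=0$, and the limit read as $+\infty$ when $\ell_+=0$), then apply it to $-X$ to obtain the negative-tail rate $1/\ell_-$ with $\ell_-:=\sup\{|u|:u\in\supp\nu,\ u<0\}$, and finally note that a sum of two tails decaying at rates $a$ and $b$ decays at the slower rate $\min(a,b)$; hence $c=\min(1/\ell_+,1/\ell_-)$, i.e.\ $c^{-1}=\max(\ell_+,\ell_-)=\sup\{|u|:u\in\supp\nu\}$, which is the constant named in the statement (the symbol ``$\nu(\bbR_+)$'' there must be read as this right endpoint of $\supp\nu$ on the positive axis, not as a total mass). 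The normal/degenerate dichotomy then falls out: $\nu\equiv0$ gives $\ell_\pm=0$ and $c=\infty$, while conversely $c=\infty$ forces $\ell_\pm=0$ by the tail lower bound below, hence $\nu\equiv0$, so a non-degenerate $X=b+\sigma Z$ has $\sigma^2>0$ and is normal, its Gaussian-order tails confirming $c=\infty$.

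For the one-sided claim fix $\delta>0$ and, using the Poisson representation, split $X=Y_\delta+R_\delta$: $Y_\delta$ is the sum of the finitely many positive jumps exceeding $\delta$ --- a compound Poisson variable with $\Po(\nu((\delta,\infty)))$ i.i.d.\ summands, each in $(\delta,\ell_+]$ when $\ell_+<\infty$ --- while $R_\delta$ carries the Gaussian part, the drift, all negative jumps, and positive jumps of size $\le\delta$. Because the positive jumps inside $R_\delta$ are bounded by $\delta$, $\E e^{tR_\delta}<\infty$ for every $t>0$, and a Chernoff bound with $t$ of order $\delta^{-1}\log x$ gives $-\log\P[R_\delta>\theta x]\ge\theta\delta^{-1}x\log x\,(1+o(1))$, which exceeds $(x\log x)/\ell_+$ once $\theta$ slightly exceeds $\delta/\ell_+$; so the remainder never controls the rate. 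Each summand of $Y_\delta$ is at most $\ell_+$, so $\{Y_\delta>y\}$ forces at least $y/\ell_+$ Poisson points, and Stirling's formula applied to $\P[\Po(\nu((\delta,\infty)))\ge y/\ell_+]$ gives $-\log\P[Y_\delta>y]\ge(y/\ell_+)\log y\,(1+o(1))$. Together these yield $\liminf_{x\to\infty}(-\log\P[X>x])/(x\log x)\ge(1-\delta/\ell_+)/\ell_+$ for every $\delta>0$, hence $\ge1/\ell_+$. For the reverse inequality, when $\ell_+<\infty$ pick any $\eta\in(0,\ell_+)$: the mass $p_\eta:=\nu((\ell_+-\eta,\infty))$ is positive, and on the event that at least $\lceil(x+L)/(\ell_+-\eta)\rceil$ of the $\Po(p_\eta)$ jumps exceeding $\ell_+-\eta$ occur --- the remainder, independent of these jumps, exceeding the fixed level $-L$ with probability at least $1/2$ for $L$ large enough --- one has $X>x$, so $\P[X>x]\ge e^{-(x/(\ell_+-\eta))\log x\,(1+o(1))}$ by the same Stirling estimate, giving $\limsup_{x\to\infty}(-\log\P[X>x])/(x\log x)\le1/(\ell_+-\eta)$; letting $\eta\downarrow0$ closes the gap. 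When $\ell_+=\infty$, replacing $\ell_+-\eta$ by an arbitrarily large constant $K$ gives rate $\le1/K$ for every $K$, i.e.\ rate $0=1/\ell_+$; when $\ell_+=0$ there are no positive jumps and $\P[X>x]$ is of Gaussian order or smaller, so the rate is $+\infty$.

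The main obstacle is the order of limits and the uniformity it demands: one must let $x\to\infty$ first to obtain a rate, and only afterwards let $\delta\downarrow0$ in the remainder bound and $\eta\downarrow0$ in the tail lower bound, so the $o(1)$ terms in the Chernoff and Stirling estimates must be uniform in $x$ for each fixed $\delta$ (resp.\ $\eta$). Concretely one has to verify that the possibly infinite mass of $\nu$ near the origin is genuinely negligible --- this is exactly the purpose of the bounded-jump moment generating function bound for $R_\delta$, and it is the one delicate point. Everything else is routine: the Poisson representation (available by Khintchine--L\'evy, as recalled above), the observation that adding extra positive jumps only helps in reaching a high level, and the Poisson tail asymptotic $-\log\P[\Po(\mu)\ge n]=n\log n-n\log\mu-n+O(\log n)$ from Stirling's formula.
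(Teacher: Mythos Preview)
The paper does not prove this theorem at all: it is quoted from \citet{Steu:vHar:2004} as one of several facts ``beyond the scope of these notes,'' so there is no in-paper argument to compare yours against.

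Your outline is the standard one and is essentially correct. You are also right to flag the formula: read literally, $c^{-1}=\max[\nu(\bbR_+),\nu(\bbR_-)]$ would give $c^{-1}=\lambda$ for the Poisson example $\nu=\lambda\delta_1$, whereas the Poisson tail visibly has $c=1$; the intended constant is $c^{-1}=\sup\{|u|:u\in\supp\nu\}$, exactly as you argue. The decomposition $X=Y_\delta+R_\delta$ into large positive jumps plus a remainder with all positive exponential moments, the Poisson/Stirling estimate forcing at least $y/\ell_+$ jumps for $\{Y_\delta>y\}$, and the matching lower bound from jumps in $(\ell_+-\eta,\ell_+]$ all go through. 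The one place to be slightly more explicit is the Chernoff step: ``$\E e^{tR_\delta}<\infty$ for every $t>0$'' is not by itself enough, since the dominant contribution $\int_0^\delta(e^{tu}-1-tu)\,\nu(du)$ grows like a constant times $e^{t\delta}$ rather than polynomially. Optimizing $t\theta x - C e^{t\delta}$ does give $t\approx\delta^{-1}\log x$ and hence $-\log\P[R_\delta>\theta x]\gtrsim(\theta/\delta)\,x\log x$, exactly the rate you claim, so the argument is sound; it just deserves one more line. The boundary cases $\ell_+=0$ (no positive jumps, hence sub-Gaussian right tail via the bounded-jump MGF) and $\ell_+=\infty$ (rate $\le1/K$ for every $K$) are handled correctly.
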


\begin{thm}[S\&vH, Prop 2.3] 
No non-degenerate bounded random variable is ID
\end{thm}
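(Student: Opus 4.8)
The plan is to argue by contradiction. Suppose $X$ is bounded, say $|X|\le M$ almost surely, is non-degenerate, and is ID. Then for every $n\in\bbN$ we may write $X=\zeta_1+\cdots+\zeta_n$ with $\zeta_1,\dots,\zeta_n$ i.i.d., and the whole point will be to show that this representation forces $\V(X)\le M^2/n$ for \emph{every} $n$, hence $\V(X)=0$, contradicting non-degeneracy.

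The crux is that boundedness of the \emph{sum} forces boundedness of the \emph{summands}. I would use the elementary identity $\supp(Y+Z)=\overline{\supp(Y)+\supp(Z)}$ for independent $Y,Z$ (the Minkowski sum of the supports, closed up), which follows directly from the definition of topological support together with independence. Iterating, if $S$ is the common support of the $\zeta_j$ then $\supp(X)=\overline{S+\cdots+S}$ ($n$ summands). Since $\supp(X)\subseteq[-M,M]$: fixing any $s_0\in S$ and letting $s$ range over $S$ gives $s+(n-1)s_0\in[-M,M]$, so $S$ is bounded; writing $S\subseteq[a,b]$ with $a=\inf S$ and $b=\sup S$ (both attained, as $S$ is a nonempty closed set), the points $na$ and $nb$ also lie in $\supp(X)\subseteq[-M,M]$, whence $n(b-a)\le 2M$, i.e.\ $b-a\le 2M/n$.

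Now a one-line estimate closes the argument: a random variable supported in an interval of length $\ell$ has variance at most $\ell^2/4$ (Popoviciu; even the crude bound $\V\le\ell^2$ suffices), so $\V(\zeta_1)\le (2M/n)^2/4=M^2/n^2$, and since the $\zeta_j$ are now known to be bounded, $\V(X)=n\,\V(\zeta_1)\le M^2/n$ by independence. As $\V(X)$ is a fixed finite number and this holds for all $n$, we get $\V(X)=0$, so $X$ equals its mean almost surely — degenerate — the desired contradiction. (An alternative analytic route, since Theorem 2.13 above is already at hand: a bounded $X$ has chf $\chi$ entire of exponential type; Theorem 2.13 says an ID chf analytic on $\bbC$ is nowhere zero; by Hadamard factorization a zero-free entire function of order $\le 1$ is $e^{a\theta}$, and $\chi(0)=1$ together with $|\chi|\le 1$ on $\bbR$ forces $\chi(\theta)=e^{i\mu\theta}$, again degenerate.)

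The only non-bookkeeping point is the support identity and the care needed to see that $\supp(X)\subseteq[-M,M]$ genuinely pins down \emph{both} endpoints of the convex hull of $S$ (not just shows $S$ bounded); after that the variance scaling $\V(X)=n\,\V(\zeta_1)$ does all the work. Degenerate edge cases ($S$ a single point) give $\V=0$ immediately and so never arise under the standing non-degeneracy assumption.
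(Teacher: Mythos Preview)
Your proof is correct and follows essentially the same variance-scaling approach as the paper: the paper tersely asserts $\|\zeta_j\|_\infty=B/n$ (with $B:=\|X\|_\infty$) and then bounds $\V(\zeta_j)\le\E\zeta_j^2\le B^2/n^2$, whereas you justify the boundedness of the summands more carefully via the Minkowski-sum support identity before invoking Popoviciu. Your parenthetical analytic route via Hadamard factorization is a genuinely different (and elegant) alternative that the paper does not pursue.
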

This one's easy enough to prove.  If $\|X\|_\infty= B<\infty$ and $X$ has
the same distribution as $\sum_{j=1}^n\zeta_j$ for iid $\{\zeta_j\}$ then
$\|\zeta_j\|_\infty=B/n$ and $\sigma^2:=\V(X)= n\V(\zeta_j) \le
n\E\zeta_j^2 \le B^2/n$, so $\sigma^2=0$ and $X$ must be degenerate.

\subsubsection*{Gamma Variables \& Processes}
The Gamma distribution $X\sim\Ga(\alpha,\beta)$ with mean $\alpha/\beta$
and variance $\alpha/\beta^2$ has chf
\begin{align*}
  \chi_X(\theta)&=\E[e^{i\theta X}]\notag\\
                &=\int_0^\infty e^{i\theta x}
                \set{\frac{\beta^\alpha}{\Gamma(\alpha)} x^{\alpha-1} e^{-\beta
                  x}}\, dx\\ 
                &=\big(1-i\theta/\beta\big)^{-\alpha}\\
                &=\exp\set{-\alpha\log(1-i\theta/\beta)}\\
                &=\exp\set{-\int_0^\infty \big(e^{i\theta u}-1\big)
                  \alpha u^{-1}e^{-\beta u}\,du},
\intertext{exactly of the form of \eqref{e:poi-lk-int} with L\'evy measure}
       \nu(du) &= \alpha u^{-1}e^{-\beta u}\bone{u>0}\,du.
\end{align*}
This measure, while infinite, does satisfy condition \eqref{e:l1-lk}:
\begin{align*}
 \int_\bbR \big(1\wedge|u|\big)\nu(du)
      &= \int_0^1\big(|u|\big)\alpha u^{-1} e^{-\beta u}\,du
          +\int_1^\infty\big(1\big)\alpha u^{-1} e^{-\beta u}\,du\\
%      &\le \alpha&&+\alpha/\beta\qquad <\infty,
      &\le \int_0^\infty\alpha e^{-\beta u}\,du=\alpha/\beta<\infty.
\end{align*}
Thus gamma-distributed random variables are ID
% or compound Poisson\footnote {Some authors use the term ``compound
% Poisson'' only for the case in which   $\nu(du)$ is a discrete measure;
% for them, the gamma random variable is a limit of compound Poisson
% variables that might be called a \emph{generalized} compound Poisson
% variable, or just ID.}
and the SII gamma \emph{process}
\[
  X_t = \iint_{\bbR\times(0,t]} u \cN(du\,ds)
\]
has infinitely-many non-negative ``jumps'' $\Delta_t=X_t-X_{t-}$ in any
time interval $a<t\le b$.  Their sum $\sum\{\Delta_t:~a<t\le b\}=X_b-X_a$
is finite, however, with probability distribution
\[ X_b-X_a \sim \Ga\big(\alpha(b-a),~\beta\big). \]
The gamma \emph{random measure} $\cG(dx\,dy)\sim\Ga(\alpha\,dx\,dy,~\beta)$
of \Sec{ss:rm} has a similar construction,
\[
  \cG(A) = \iiint_{\bbR\times A} u \cN(du\,dx\,dy)
\]
as the sum of the heights $u_j$ of a Poisson cloud of points
$(u_j,x_j,y_j)$ for which $(x_j,y_j)\in A$.  \citet{Wolp:Icks:1998} show
how to simulate such random measures very efficiently, drawing the jumps
$\{u_j\}$ in monotone decreasing order.
\vfill\newpage
\ifBib
\bibliography{arp}

\begin{thebibliography}{11}
\providecommand{\natexlab}[1]{#1}
\providecommand{\url}[1]{\texttt{#1}}
\expandafter\ifx\csname urlstyle\endcsname\relax
  \providecommand{\doi}[1]{doi: #1}\else
  \providecommand{\doi}{doi: \begingroup \urlstyle{rm}\Url}\fi

\bibitem[Bose et~al.(2002)Bose, DasGupta, and Rubin]{Bose:DasG:Rubi:2002}
Arup Bose, Anirban DasGupta, and Herman Rubin.
\newblock A contemporary review and bibliography of infinitely divisible
  distributions and processes.
\newblock \emph{Sankhy\={a}, Ser. {\rm A}}, 64\penalty0 (3):\penalty0 763--819,
  2002.
\newblock \doi{10.2307/25051430}.
\newblock Special issue in memory of D. Basu.

\bibitem[Cox et~al.(1985)Cox, Ingersoll, and Ross]{Cox:Inge:Ross:1985}
John~C. Cox, Jonathan~E. Ingersoll, Jr., and Stephen~A. Ross.
\newblock A theory of the term structure of interest rates.
\newblock \emph{Econometrica}, 53\penalty0 (2):\penalty0 385--408, 1985.
\newblock \doi{10.2307/1911242}.

\bibitem[Feller(1951)]{Fell:1951}
William Feller.
\newblock Two singular diffusion problems.
\newblock \emph{Annals of Mathematics}, 54\penalty0 (1):\penalty0 173--182,
  1951.
\newblock \doi{10.2307/1969318}.

\bibitem[Hjort(1980)]{Hjor:1980}
Nils~Lid Hjort.
\newblock Nonparametric {B}ayes estimators based on beta processes in models
  for life history data.
\newblock \emph{Ann. Stat.}, 18\penalty0 (3):\penalty0 1259--1294, 1980.
\newblock \doi{10.1214/aos/1176347749}.

\bibitem[Khinchine and L{\'e}vy(1936)]{Khin:Levy:1936}
Alexander~Ya. Khinchine and Paul L{\'e}vy.
\newblock Sur les lois stables.
\newblock \emph{Comptes rendus hebdomadaires des seances de l'Acad{\'e}mie des
  sciences. Acad{\'e}mie des science (France). Serie A. Paris}, 202:\penalty0
  374--376, 1936.

\bibitem[Steutel and van Harn(2004)]{Steu:vHar:2004}
Frederik~W. Steutel and Klaas van Harn.
\newblock \emph{Infinite Divisibility of Probability Distributions on the Real
  Line}, volume 259 of \emph{Monographs and Textbooks in Pure and Applied
  Mathematics}.
\newblock Marcel Dekker, New York, NY, 2004.
\newblock ISBN 0-8247-0724-9.
\newblock \doi{10.1201/9780203014127}.

\bibitem[Thibaux and Jordan(2007)]{Thib:Jord:2007}
Romain Thibaux and Michael~I. Jordan.
\newblock Hierarchical beta processes and the {I}ndian buffet process.
\newblock In Marina Meila and Xiaotong Shen, editors, \emph{Proceedings of the
  Eleventh International Conference on Artificial Intelligence and Statistics
  (AISTATS 2007), March 21--24, San Juan, Puerto Rico}, volume~2, pages
  564--571. PMLR, 2007.
\newblock URL \url{http://proceedings.mlr.press/v2/thibaux07a.html}.

\bibitem[Walker(2000)]{Walk:2000}
Stephen~G. Walker.
\newblock A note on the innovation distribution of a gamma distributed
  autoregressive process.
\newblock \emph{Scand. J.~Stat.}, 27\penalty0 (4):\penalty0 575--576, 2000.
\newblock \doi{10.1111/1467-9469.00208}.

\bibitem[Wang(2013)]{Wang:2013}
Jianyu Wang.
\newblock \emph{Bayesian Modeling and Adaptive Monte Carlo with Geophysics
  Applications}.
\newblock PhD thesis, Duke University Statistical Science Department, 2013.

\bibitem[Wolpert and Brown(2021)]{Wolp:Brow:2017}
Robert~L. Wolpert and Lawrence~D. Brown.
\newblock Markov infinitely-divisible stationary time-reversible integer-valued
  processess.
\newblock On arXiv.

\bibitem[Wolpert and Ickstadt(1998)]{Wolp:Icks:1998}
Robert~L. Wolpert and Katja Ickstadt.
\newblock Poisson/gamma random field models for spatial statistics.
\newblock \emph{Biometrika}, 85\penalty0 (2):\penalty0 251--267, 1998.
\newblock \doi{10.1093/biomet/85.2.251}.

\end{thebibliography}
\else
\newcommand{\noopsort}[1]{}

\fi % \ifBib
\end{document}
\vfill\hfill{\tiny Last edited: \today}